\def\bbR{\mathbb{R}}
\def\hm{\hphantom{-}}
\def\vextra{\vphantom{\vrule height0.4cm width0.9pt depth0.1cm}}
\def\bu{\pmb{u}}
\def\bv{\pmb{v}}
\def\bx{\pmb{x}}
\def\bz{\pmb{z}}
\def\NRes{\mbox{NRes}}
\def\QME{\mbox{{\sc qme}}}
\def\DA{\mbox{{\sc da}}}
\def\BL{\mbox{{\sc bl1}}}
\def\BLDU{\mbox{{\sc bl1-du}}}
\def\BLL{\mbox{{\sc bl2}}}
\def\BLLDU{\mbox{{\sc bl2-du1}}}
\def\BLLDUU{\mbox{{\sc bl2-du2}}}
\newtheorem{theorem}{Theorem}[section]
\newtheorem{lemma}{Lemma}[section]
\theoremstyle{definition}
\newtheorem{example}{Example}[section]
\numberwithin{algorithm}{section}
\numberwithin{equation}{section}
\numberwithin{figure}{section}
\numberwithin{table}{section}
\def\sss{\scriptstyle}
\def\scrA{\mathscr{A}}
\def\scrB{\mathscr{B}}
\title{A structure-preserving doubling algorithm for solving a class of quadratic matrix equation with $M$-matrix}
\author[a]{Cairong Chen\thanks{Corresponding author. Supported partially by the NSFC grant 11901024. Email address: cairongchen@fjnu.edu.cn.}}
\affil[a]{College of Mathematics and Informatics \& FJKLMAA, Fujian Normal University, Fuzhou 350007, China.}
\begin{document}

\date{\today}

\maketitle

\begin{abstract}
Consider the problem of finding the maximal nonpositive solvent $\Phi$ of the quadratic matrix equation (\QME) $X^2 + BX + C =0$ with $B$ being a nonsingular $M$-matrix and $C$ an $M$-matrix such that $B^{-1}C\ge 0$, and $B - C - I$ a nonsingular $M$-matrix. Such \QME\ arises from an overdamped vibrating system. Recently, Yu et al. ({\em Appl. Math. Comput.}, 218: 3303--3310, 2011) proved that $\rho(\Phi)\le 1$ for this \QME. In this paper, we slightly improve their result and prove $\rho(\Phi)< 1$, which is important for the quadratic convergence of the structure-preserving doubling algorithm. Then, a new globally monotonically and quadratically convergent structure-preserving doubling algorithm to solve the \QME\ is developed. Numerical examples are presented to demonstrate the feasibility and effectiveness of our method.
\end{abstract}

{\small
\medskip
{\em 2000 Mathematics Subject Classification}. 15A24, 65F30, 65H10

\medskip
{\em Key words}.
Quadratic matrix equation, structure-preserving doubling algorithm, $M$-matrix, maximal nonpositive solvent, quadratic convergence
}

\section{Introduction}
In this paper, we consider the problem of finding the maximal nonpositive solvent of the following quadratic matrix equation (\QME)
\begin{equation}\label{eq:QME1}
Q_1(X) \equiv \tilde{A}X^2 + \tilde{B}X + \tilde{C} = 0,
\end{equation}
where
\begin{equation*}
\framebox{
\parbox{11cm}{
$\tilde{A}\in \bbR^{n\times n}$ is a diagonal matrix with positive diagonal elements,\\
$\tilde{B}\in \bbR^{n\times n}$ is a nonsingular $M$-matrix and\\
$\tilde{C}\in \bbR^{n\times n}$ is an $M$-matrix such that $\tilde{B}^{-1}\tilde{C} \ge 0$.
}
}
\end{equation*}
Such \QME\ arises from an overdamped vibrating system \cite{Tisseur2000Backward,Tisseur2001The}. By left multiplying $\tilde{A}^{-1}$ \cite{Yu2011On}, without changing the $M$-matrix structure of it, \QME\ \eqref{eq:QME1} can be reduced to the following form
\begin{equation}\label{eq:QME2}
Q_2(X) \equiv X^2 + B X + C = 0,
\end{equation}
where $B$ is a nonsingular $M$-matrix and $C$ is an $M$-matrix such that $B^{-1}C \ge 0$.
It is known that \eqref{eq:QME2} has a maximal nonpositive solvent $\Phi$ under the condition that \cite{Yu2011On}
\begin{equation}\label{eq:cond}
\mbox{$B - C - I$ is a nonsingular $M$-matrix}.
\end{equation}
This solvent $\Phi$ is the one of interest.

Various iterative methods have been developed to obtain the maximal nonpositive solvent of \QME\ \eqref{eq:QME2} with assumption \eqref{eq:cond},  including the Newton's method and Bernoulli-like methods (fixed-point iterative methods) \cite{Yu2011On}, modified Bernoulli-like methods with diagonal update skill~\cite{Kim2016Diagonal}. Newton's method is not competitive in terms of CPU time since there is a generalized Sylvester matrix equation to solve in each Newton's iterative step. The fixed-point iterative methods are usually linearly or sublinearly convergent and sometimes can be very slow \cite{Yu2011On}.

There are many researches on iterative methods for other \QME s; see \cite{Bai2005On,Davis1981Numerical,Guo2003On,Guo2009Detecting,Guo2005Algorithms,
He2002A,Higham2000Solving,Higham2001Solving,Kratz1987Numer,Lu2016Numerical,Meini1998Solving,Yu2010A,Yu2012Conver} and the references therein. Our work here is mainly inspired by recent study on highly accurate structure-preserving doubling algorithm for quadratic matrix equation from
quasi-birth-and-death process~\cite{Chen2019Highly}. Structure-preserving doubling algorithms are very efficient iterative methods for solving nonlinear matrix equations; for more details, the reader is referred to \cite{Chiang2009Convergence,Chu2005A,Chu2004Structure,
Guo2007On,Guo2006A,Huang2015A,Huang2018A,Wang2012Alter} and the references therein.

Yu et al. in \cite{Yu2011On} proved $\rho(\Phi)\le 1$ under \eqref{eq:cond}. In this paper, we will slightly improve their result and prove that $\rho(\Phi)< 1$ under the same condition. This is important, because it is desired for the quadratic convergence of structure-preserving doubling algorithms. Based on our new result about $\rho(\Phi)$, furthermore, we extend the structure-preserving doubling algorithm for (SF1) \cite{Huang2018A} to solve \QME\ \eqref{eq:QME2} and  give the quadratically convergent results.

The rest of this paper is organized as follows. In section~\ref{sec:prelim} we give some notations and state a few basic results on
nonnegative and $M$-matrices. The main results of this paper are presented in section~\ref{sec:MNSol}. Numerical examples are given in section~\ref{sec:egs} to demonstrate the performance of our method. Finally, conclusions are made in section~\ref{sec:concl}.

\section{Notations and preliminaries}\label{sec:prelim}
In this section, we first introduce some necessary notations and terminologies for this paper. $\bbR^{m\times m}$ is the set of all $m\times m$ real matrices,
$\bbR^n=\bbR^{n\times 1}$, and
$\bbR=\bbR^1$. $I_n$ (or simply $I$ if its dimension
is clear from the context) is the $n\times n$ identity matrix.
For $X\in\bbR^{m\times n}$, $X_{(i,j)}$ refers to its $(i,j)$th entry.
Inequality $X\le Y$ means $X_{(i,j)}\le Y_{(i,j)}$ for all $(i,j)$,
and similarly for $X<Y$, $X\ge Y$, and $X>Y$. In particular, $X\ge
0$ means that $X$ is entrywise nonnegative and it is called a nonnegative matrix. $X$ is entrywise nonpositive if $-X$ is entrywise nonnegative.
A matrix $A\in\bbR^{m\times n}$ is {\em positive}, denoted by $A>0$, if all its entries are positive. The same understanding goes to vectors. For a square matrix $X$,  denote by $\rho(X)$ its spectral radius. A matrix $A\in\bbR^{n\times n}$ is called a {\em $Z$-matrix\/} if $A_{(i,j)}\le 0$ for all $i\ne j$.
Any $Z$-matrix $A$ can be written as
$sI-N$ with $N\ge 0$, and it is called an
{\em $M$-matrix\/} if $s\ge \rho(N)$. Specifically, it is a {\em singular $M$-matrix\/} if $s=\rho(N)$, and a {\em nonsingular
$M$-matrix\/} if $s>\rho(N)$.

The following results on nonnegative matrices and $M$-matrices can be found in, e.g., \cite{Berman1994Nonnegative,Meyer2000Matrix}.

\begin{theorem} \label{thm:P-F-theorem}
Let $A\in \bbR^{n\times n}$ be a nonnegative matrix. Then the spectral radius, $\rho(A)$, is an eigenvalue of $A$ and there exist a nonnegative right eigenvector $\bx$ associated with the eigenvalue $\rho(A)$: $A\bx=\rho(A)\bx$.
\end{theorem}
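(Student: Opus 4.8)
The plan is to reduce the statement to the classical Perron theorem for strictly positive matrices and then recover the general nonnegative case by a perturbation-and-limit argument. First I would treat the case $A>0$. Here I would apply Brouwer's fixed point theorem to the continuous self-map $f(\bx)=A\bx/\|A\bx\|_1$ of the standard simplex $\{\bx\ge 0:\ \one^{\T}\bx=1\}$, which is nonempty, compact and convex, and on which $A\bx\neq 0$ because $A>0$ and $\bx\ge 0$, $\bx\neq 0$. A fixed point $\bx$ then satisfies $A\bx=\lambda\bx$ with $\lambda=\|A\bx\|_1>0$, and since $A>0$ forces $A\bx>0$, the eigenvector is in fact strictly positive.

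Next I would show that this Perron value $\lambda$ equals $\rho(A)$. For any eigenpair $A\by=\mu\by$ the triangle inequality gives the entrywise bound $|\mu|\,|\by|\le A|\by|$. Applying the same fixed-point construction to $A^{\T}$ produces a strictly positive left eigenvector $\bw^{\T}A=\lambda\bw^{\T}$; multiplying the bound on the left by $\bw^{\T}>0$ yields $|\mu|\,\bw^{\T}|\by|\le \bw^{\T}A|\by|=\lambda\,\bw^{\T}|\by|$, and since $\bw^{\T}|\by|>0$ I conclude $|\mu|\le\lambda$. Hence $\lambda=\rho(A)$, which establishes the theorem, with a strictly positive eigenvector, in the positive case.

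Finally I would pass to a general nonnegative $A$ by perturbation. For $\varepsilon>0$ set $A_\varepsilon=A+\varepsilon\,\one\one^{\T}>0$; the positive case gives $A_\varepsilon\bx_\varepsilon=\rho(A_\varepsilon)\bx_\varepsilon$ with $\bx_\varepsilon>0$ normalized so that $\one^{\T}\bx_\varepsilon=1$. Since the spectral radius is a continuous function of the matrix entries, $\rho(A_\varepsilon)\to\rho(A)$ as $\varepsilon\to 0^+$. The normalized eigenvectors lie in the compact simplex, so some sequence $\varepsilon_k\to 0^+$ has $\bx_{\varepsilon_k}\to\bx$ with $\bx\ge 0$ and $\one^{\T}\bx=1$, in particular $\bx\neq 0$; letting $k\to\infty$ in $A_{\varepsilon_k}\bx_{\varepsilon_k}=\rho(A_{\varepsilon_k})\bx_{\varepsilon_k}$ gives $A\bx=\rho(A)\bx$ with $\bx\ge 0$, as claimed.

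The main obstacle is the positive case, specifically the identification $\lambda=\rho(A)$: Brouwer's theorem alone only yields \emph{some} positive eigenvalue, and the left-eigenvector comparison is what pins it down to the spectral radius. Once that is in hand, the limiting step is routine, the only subtlety being to keep the eigenvectors normalized so that the compactness argument produces a nonzero limit rather than collapsing to $\bx=0$.
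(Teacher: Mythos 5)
The paper never proves this statement: it is quoted as a classical Perron--Frobenius result, with the text pointing only to the standard references (Berman--Plemmons, Meyer). So there is no in-paper argument to compare against; your proposal should be judged on its own, and it is a correct, essentially self-contained proof along one of the standard textbook routes: Brouwer's fixed point theorem applied to $f(\bx)=A\bx/\|A\bx\|_1$ on the simplex for the strictly positive case, a left-eigenvector comparison to identify the resulting eigenvalue with $\rho(A)$, and the perturbation $A_\varepsilon=A+\varepsilon\one\one^{\T}$ together with compactness of the simplex for general nonnegative $A$. Your normalization $\one^{\T}\bx_\varepsilon=1$ correctly prevents the limit eigenvector from collapsing to zero, and continuity of $\rho$ (the eigenvalues, as roots of the characteristic polynomial, depend continuously on the entries) legitimately gives $\rho(A_{\varepsilon})\to\rho(A)$. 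Also note that your bound $|\mu|\,|\by|\le A|\by|$ must be read with $|\by|$ the entrywise modulus of a possibly \emph{complex} eigenvector, which is the right reading since $\rho(A)$ ranges over the complex spectrum; the step is valid as written.

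One small point needs a line of justification: when you apply the fixed-point construction to $A^{\T}$ you obtain a strictly positive left eigenvector $\bw$ with $\bw^{\T}A=\lambda'\bw^{\T}$ for \emph{some} positive $\lambda'$, and you write $\lambda'=\lambda$ without argument. The fix is the standard pairing identity: $\lambda'\,\bw^{\T}\bx=\bw^{\T}A\bx=\lambda\,\bw^{\T}\bx$, and $\bw^{\T}\bx>0$ because both vectors are strictly positive, so $\lambda'=\lambda$. (Alternatively, your comparison already gives $|\mu|\le\lambda'$ for every eigenvalue $\mu$ of $A$, hence $\rho(A)\le\lambda'$; since $A$ and $A^{\T}$ have the same spectrum, $\lambda'\le\rho(A)$, so $\lambda'=\rho(A)$, and the pairing identity then pins down $\lambda=\rho(A)$.) With that one-line repair the proof is complete, and it is arguably more useful pedagogically than the paper's bare citation, though for the paper's purposes the citation is entirely appropriate since only the statement itself is used (in the proof of Lemma~\ref{lm:QBD-rho-1}).
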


\begin{theorem}\label{thm:MMtx-equiv}
Let $A\in \bbR^{n\times n}$ be a $Z$-matrix. Then the following statements are equivalent:
\begin{enumerate}
  \item [{\rm (a)}] $A$ is a nonsingular $M$-matrix;

  \item [{\rm (b)}] $A^{-1}\ge 0$;

  \item [{\rm (c)}] $A\bu>0$ holds for some positive vector $\bu\in \bbR^n$.

\end{enumerate}
\end{theorem}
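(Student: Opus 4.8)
The plan is to prove the three statements equivalent by the cyclic chain (a)~$\Rightarrow$~(b)~$\Rightarrow$~(c)~$\Rightarrow$~(a). Throughout I would write the $Z$-matrix as $A = sI - N$ with $N \ge 0$, so that statement (a) is exactly the assertion $s > \rho(N)$; note that $s > \rho(N) \ge 0$ forces $s > 0$. For (a)~$\Rightarrow$~(b) the idea is a Neumann series: since $\rho(N/s) = \rho(N)/s < 1$, the series $\sum_{k \ge 0}(N/s)^k$ converges to $(I - N/s)^{-1}$, whence $A^{-1} = s^{-1}\sum_{k \ge 0}(N/s)^k$. Every summand is entrywise nonnegative because $N \ge 0$ and $s > 0$, so $A^{-1} \ge 0$.

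For (b)~$\Rightarrow$~(c) I would exhibit the required vector explicitly, taking $\bu = A^{-1}\one$, where $\one$ denotes the (positive) all-ones vector. Then $A\bu = \one > 0$ at once, so the only thing left to verify is the strict positivity $\bu > 0$, which amounts to every row sum of $A^{-1}$ being positive. This follows from the nonsingularity of $A^{-1}$: a nonnegative matrix cannot have a zero row without being singular, so each row of $A^{-1}$ is nonnegative and nonzero, and therefore has a strictly positive sum.

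For (c)~$\Rightarrow$~(a), which is where Theorem~\ref{thm:P-F-theorem} enters, I would rewrite the hypothesis $A\bu > 0$ with $\bu > 0$ as the componentwise inequality $s\bu > N\bu$. Applying Theorem~\ref{thm:P-F-theorem} to the nonnegative matrix $N^T$ (and recalling $\rho(N^T) = \rho(N)$) produces a nonnegative, nonzero vector $\by$ with $N^T\by = \rho(N)\by$, i.e. a left eigenvector satisfying $\by^T N = \rho(N)\by^T$. Multiplying the strict inequality $s\bu - N\bu > 0$ on the left by $\by^T \ge 0$ gives $s\,\by^T\bu > \rho(N)\,\by^T\bu$; since $\by \ge 0$ is nonzero and $\bu > 0$, the scalar $\by^T\bu$ is strictly positive, and dividing by it yields $s > \rho(N)$, which is precisely (a).

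I expect the last implication to be the main obstacle, and within it the delicate point is the passage to a \emph{strict} scalar inequality after multiplying by $\by^T$: a nonnegative vector with some zero entries can collapse a strict componentwise inequality into an equality, so it is essential to use both that $s\bu - N\bu$ is positive in every component and that $\by$ is nonzero. A second, smaller place requiring care is the upgrade from $\bu \ge 0$ to $\bu > 0$ in (b)~$\Rightarrow$~(c), where nonsingularity of $A^{-1}$ is exactly what rules out a vanishing row sum.
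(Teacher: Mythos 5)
Your proof is correct, but there is no in-paper argument to compare it against: the paper states Theorem~\ref{thm:MMtx-equiv} without proof, quoting it as a standard fact from its references on nonnegative matrices and $M$-matrices (Berman--Plemmons and Meyer). Your cyclic chain (a)$\Rightarrow$(b)$\Rightarrow$(c)$\Rightarrow$(a) is the classical textbook route, and each step checks out: the Neumann series $A^{-1}=s^{-1}\sum_{k\ge 0}(N/s)^k$ is valid because $s>\rho(N)\ge 0$ gives $s>0$ and $\rho(N/s)<1$; the choice $\bu=A^{-1}\one$ works because a nonnegative matrix with a zero row is singular, so every row sum of $A^{-1}$ is positive; and the closing step correctly invokes Theorem~\ref{thm:P-F-theorem} for $N^{T}$ to get a left Perron vector $\by\ge 0$, $\by\ne 0$, with $\by^{T}N=\rho(N)\by^{T}$. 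You also correctly identify and discharge the two genuinely delicate points: multiplying the componentwise-strict inequality $A\bu>0$ by $\by^{T}$ yields a \emph{strict} scalar inequality $\by^{T}A\bu>0$ only because every component of $A\bu$ is positive and $\by$ is nonzero, and the division by $\by^{T}\bu$ is legitimate only because $\bu>0$ forces $\by^{T}\bu>0$. One small point you use silently: identifying (a) with ``$s>\rho(N)$'' presumes the quantity $s-\rho(N)$ does not depend on the chosen splitting $A=sI-N$, $N\ge 0$; this is true (two admissible splittings differ by $N'=N+(s'-s)I$, and $\rho(N+cI)=\rho(N)+c$ for $c\ge 0$), and worth a sentence if you want the argument fully airtight against the paper's definition, which is phrased in terms of an arbitrary such splitting.
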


\section{The main results}\label{sec:MNSol}
In this section, we give the main results of this paper. The Lemma~\ref{lm:QBD-rho-2} below can be found in~\cite[Theorem~3.1]{Yu2011On}. The first goal of this paper is to further prove $\rho(\Phi)< 1$.
\begin{lemma}\label{lm:QBD-rho-2}
Suppose \eqref{eq:cond}, then \QME\ \eqref{eq:QME2} has a maximal nonpositive solvent $\Phi$ with $\rho(\Phi)\le 1$, also $B+\Phi$ and $B+\Phi-C$ are both nonsingular $M$-matrices.
\end{lemma}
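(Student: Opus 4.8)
The plan is to obtain $\Phi$ as the negative of the minimal nonnegative solution of the equivalent fixed-point equation, and to drive everything with a single positive vector supplied by Theorem~\ref{thm:MMtx-equiv}(c). Writing $\Phi=-Y$ with $Y\ge 0$, \eqref{eq:QME2} becomes $Y^2-BY+C=0$, equivalently $Y=F(Y)$ with $F(Y):=B^{-1}(Y^2+C)$; since $B$ is a nonsingular $M$-matrix we have $B^{-1}\ge 0$ by Theorem~\ref{thm:MMtx-equiv}, and together with $B^{-1}C\ge 0$ this makes $F$ a monotone self-map of the nonnegative matrices (if $0\le Y_1\le Y_2$ then $Y_1^2\le Y_2^2$, so $F(Y_1)\le F(Y_2)$). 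First I would fix $\bu>0$ with $(B-C-I)\bu>0$, available from Theorem~\ref{thm:MMtx-equiv}(c) applied to the nonsingular $M$-matrix $B-C-I$ of \eqref{eq:cond}, and iterate $Y_0=0$, $Y_{k+1}=F(Y_k)$. Monotonicity gives $0=Y_0\le Y_1\le\cdots$ and, comparing against any nonnegative solvent $Y'$, also $Y_k\le Y'$ for all $k$; hence the limit will be minimal and $\Phi=-Y_\star$ the maximal nonpositive solvent.

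The hard part is to show the increasing sequence is bounded, and the crux is the invariant $Y_k\bu\le\bu$, proved by induction from $Y_0\bu=0\le\bu$. Assuming $Y_k\bu\le\bu$ we get $Y_k^2\bu=Y_k(Y_k\bu)\le Y_k\bu\le\bu$, whence
\[
Y_{k+1}\bu=B^{-1}\big(Y_k^2\bu+C\bu\big)\le B^{-1}(\bu+C\bu)=B^{-1}(I+C)\bu=\bu-B^{-1}(B-C-I)\bu\le\bu ,
\]
the last inequality because $B^{-1}\ge 0$ and $(B-C-I)\bu>0$. As $\bu>0$, the relation $Y_k\bu\le\bu$ bounds every entry of $Y_k$, so the monotone sequence converges to some $Y_\star\ge 0$ with $F(Y_\star)=Y_\star$ by continuity. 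Passing to the limit in the display also yields the strict bound $Y_\star\bu\le\bu-B^{-1}(B-C-I)\bu<\bu$, since $B^{-1}$ has no zero row and $(B-C-I)\bu>0$ force $B^{-1}(B-C-I)\bu>0$; I will need this strict form below. For $\rho(\Phi)=\rho(Y_\star)\le 1$ I would apply Theorem~\ref{thm:P-F-theorem} to $Y_\star^{\T}$ to obtain $\bw\ge 0$, $\bw\ne 0$, with $\bw^{\T}Y_\star=\rho(Y_\star)\bw^{\T}$; then $\rho(Y_\star)\,\bw^{\T}\bu=\bw^{\T}Y_\star\bu\le\bw^{\T}\bu$ together with $\bw^{\T}\bu>0$ gives $\rho(Y_\star)\le 1$.

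Finally I would establish the two $M$-matrix claims through Theorem~\ref{thm:MMtx-equiv}(c). Both matrices are $Z$-matrices: $B$ is a $Z$-matrix, $B-C$ shares its off-diagonal entries with the $Z$-matrix $B-C-I$ and is therefore a $Z$-matrix, and subtracting the nonnegative $Y_\star$ keeps the off-diagonal entries nonpositive, so $B+\Phi=B-Y_\star$ and $B+\Phi-C=(B-C)-Y_\star$ are $Z$-matrices. From $Y_\star^2-BY_\star+C=0$ I read off $C=(B-Y_\star)Y_\star$, giving the factorization $B+\Phi-C=(B+\Phi)(I-Y_\star)$. Writing $\bv:=(I-Y_\star)\bu$, which is strictly positive by the strict bound above, a direct computation gives $(B+\Phi-C)\bu=(B-C-I)\bu+(I-Y_\star)\bu>0$, so $B+\Phi-C$ is a nonsingular $M$-matrix; and $(B+\Phi)\bv=(B+\Phi)(I-Y_\star)\bu=(B+\Phi-C)\bu>0$ with $\bv>0$ shows $B+\Phi$ is a nonsingular $M$-matrix. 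The main obstacle throughout is the boundedness step: once the invariant $Y_k\bu\le\bu$ and its strict limiting form are secured, the spectral bound and both $M$-matrix statements follow by repeated use of the positive-vector characterization.
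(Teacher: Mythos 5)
Your proposal is correct, and it is worth noting that the paper itself offers no proof of this lemma at all: it is quoted verbatim from \cite[Theorem~3.1]{Yu2011On}, so your argument is compared against a citation rather than an in-paper derivation. Your construction is the natural self-contained route, and every step checks out: the substitution $X=-Y$ turns \eqref{eq:QME2} into the fixed-point equation $Y=B^{-1}(Y^2+C)$, whose iteration from $Y_0=0$ is exactly the Bernoulli-like iteration \BL\ of Yu et al., so in spirit you are reconstructing their existence proof; the invariant $Y_k\bu\le\bu$ driven by $(B-C-I)\bu>0$ is the right boundedness mechanism, the left Perron--Frobenius vector correctly converts $Y_\star\bu\le\bu$ into $\rho(\Phi)\le 1$, and the factorization $B+\Phi-C=(B+\Phi)(I-Y_\star)$ together with Theorem~\ref{thm:MMtx-equiv}(c) cleanly disposes of both $M$-matrix claims (the identity $(B+\Phi-C)\bu=(B-C-I)\bu+(I-Y_\star)\bu$ is verified by direct expansion). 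One observation you left on the table: your strict limiting bound $Y_\star\bu<\bu$, fed through the same left-eigenvector computation, gives $\rho(\Phi)\bw^{\T}\bu=\bw^{\T}Y_\star\bu<\bw^{\T}\bu$ and hence $\rho(\Phi)<1$ outright. That strict inequality is precisely the paper's advertised first main result, which it obtains by combining this lemma with the separate contradiction argument of Lemma~\ref{lm:QBD-rho-1} ($\rho(X)\neq 1$ for any nonpositive solvent); your proof therefore subsumes Lemma~\ref{lm:QBD-rho-1} for the maximal solvent and delivers the stronger conclusion in one pass, at the modest cost of establishing (as you did) that $B^{-1}(B-C-I)\bu>0$ because the nonnegative nonsingular matrix $B^{-1}$ has no zero row.
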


Since $B - C - I$ is a nonsingular $M$-matrix,
by Theorems~\ref{thm:MMtx-equiv}, there exists a positive vector $\bu>0$ in $\bbR^n$ such that
\begin{equation*}
\bv=(B - C - I)\bu >0.
\end{equation*}
Throughout this paper, $\bv$ and $\bu$ are reserved for the ones here. The following lemma is inspired by \cite[Lemma~3.2]{Chen2019Highly}, we still give the proof for completeness.

\begin{lemma}\label{lm:QBD-rho-1}
Suppose \eqref{eq:cond}, i.e.,  $B - C - I$ is a nonsingular $M$-matrix.
Then $\rho(X)\neq 1$ for any nonpositive solvent $X$ of \eqref{eq:QME2}.
\end{lemma}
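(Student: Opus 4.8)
The plan is to argue by contradiction via the Perron--Frobenius theorem. Suppose, to the contrary, that $\rho(X)=1$ for some nonpositive solvent $X$ of \eqref{eq:QME2}. Since $X\le 0$, the matrix $-X$ is nonnegative, and because the spectrum of $-X$ is the negation of the spectrum of $X$ we have $\rho(-X)=\rho(X)=1$. Then Theorem~\ref{thm:P-F-theorem} applies to the nonnegative matrix $-X$: its spectral radius $\rho(-X)=1$ is an eigenvalue, with an associated nonnegative eigenvector $\bx\ge 0$, $\bx\neq 0$. In other words, $(-X)\bx=\bx$, i.e. $X\bx=-\bx$, exposing $-1$ as an eigenvalue of $X$ with nonnegative eigenvector $\bx$.

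Next I would substitute this eigenvector into the matrix equation. Applying $Q_2(X)=X^2+BX+C=0$ to $\bx$ gives $X^2\bx+BX\bx+C\bx=0$. Using $X\bx=-\bx$ twice yields $X^2\bx=X(-\bx)=\bx$ and $BX\bx=-B\bx$, so the relation collapses to $\bx-B\bx+C\bx=0$, that is, $(B-C-I)\bx=0$.

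To finish, invoke the hypothesis \eqref{eq:cond}: $B-C-I$ is a nonsingular $M$-matrix, hence invertible, which forces $\bx=0$ and contradicts $\bx\neq 0$. Therefore $\rho(X)\neq 1$ for every nonpositive solvent $X$.

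I expect the only point requiring care to be the first paragraph, specifically the two identifications $\rho(-X)=\rho(X)$ and the fact that Perron--Frobenius delivers an eigenvector attached precisely to the eigenvalue equal to the spectral radius of the \emph{nonnegative} matrix $-X$; once $-1$ is extracted as an eigenvalue of $X$, the remaining computation is purely algebraic and the nonsingularity of $B-C-I$ closes the argument at once. It is worth noting that this route does not actually use the reserved vectors $\bu$ and $\bv$, which I anticipate are needed only for the sharper estimate $\rho(\Phi)<1$ that follows.
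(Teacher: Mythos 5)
Your proof is correct and follows essentially the same route as the paper: the paper likewise assumes $\rho(X)=1$, applies Theorem~\ref{thm:P-F-theorem} to the nonnegative matrix $-X$ to get a nonzero $\bz\ge 0$ with $-X\bz=\bz$, and then substitutes into $(X^2+BX+C)\bz=0$ to obtain $(B-C-I)\bz=0$, contradicting nonsingularity. Your closing observation is also accurate---the reserved vectors $\bu$ and $\bv$ play no role in this lemma and are used elsewhere (e.g., in Theorem~\ref{thm:QBD-exist}).
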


\begin{proof}
Suppose, to the contrary, that $\rho(X)=1$ (which is equivalent to $\rho(-X)=1$), where $X$ is nonpositive solvent of \eqref{eq:QME2}.
Then according to Theorem~\ref{thm:P-F-theorem},
there exists a nonzero and nonnegative vector $\bz\in \bbR^n$ such that $-X\bz=\bz$ and thus
$$
(X^2+BX+C)\bz=0
$$
to give $(B-C-I)\bz=0$, which contradicts with that $B-C-I$ is nonsingular.
\end{proof}

Combining Lemma~\ref{lm:QBD-rho-2} and Lemma~\ref{lm:QBD-rho-1}, we immediately finish our first goal of this paper. Moreover, we have the following theorem. The theorem is implied by \cite[Theorem 3.1]{Chen2019Highly} or \cite[Theorem 2.3]{Meng2018Condition}.

\begin{theorem}\label{thm:QBD-exist}
Under the assumption \eqref{eq:cond}, the \QME\ \eqref{eq:QME2} has a unique maximal nonpositive solvent $\Phi$.
Moreover, it hold that $ \Phi\le X_0$ and
\begin{equation*}
-\Phi \bu\le \bu-B^{-1}\bv,
\end{equation*}
where $X_0=-B^{-1}C$ is as defined in \eqref{eq:scrA0}.
\end{theorem}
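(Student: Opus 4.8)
The plan is to treat the three claims in turn, leaning on Lemma~\ref{lm:QBD-rho-2} for the existence of a maximal nonpositive solvent $\Phi$. Uniqueness is then automatic: a maximal element of the set of nonpositive solvents dominates every other such solvent, so two maximal solvents would each dominate the other and hence coincide. The bound $\Phi\le X_0$ I would read off directly from \eqref{eq:QME2}. Rewriting $B\Phi=-(\Phi^2+C)$ gives $\Phi=-B^{-1}\Phi^2-B^{-1}C=X_0-B^{-1}\Phi^2$; since $\Phi\le 0$ forces $\Phi^2\ge 0$ entrywise and $B^{-1}\ge 0$ by Theorem~\ref{thm:MMtx-equiv}, the term $B^{-1}\Phi^2$ is nonnegative and $\Phi\le X_0$ follows.

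For the vector estimate I would use the monotone iteration $X_{k+1}=X_0-B^{-1}X_k^2$ started from $X_0=-B^{-1}C$. A first induction shows $\Phi\le X_{k+1}\le X_k\le\cdots\le X_0\le 0$: nonpositivity and the decrease $X_{k+1}\le X_k$ follow from $B^{-1}\ge 0$ together with the monotonicity of matrix squaring on nonpositive matrices (from $0\ge X_{k-1}\ge X_k$ one gets $X_{k-1}^2\le X_k^2$), while the lower bound uses $0\ge X_k\ge\Phi\Rightarrow X_k^2\le\Phi^2$ and the fixed-point identity $\Phi=X_0-B^{-1}\Phi^2$. Being monotone and order-bounded, $\{X_k\}$ converges; its limit solves \eqref{eq:QME2}, is nonpositive, and lies above $\Phi$, so maximality of $\Phi$ forces $X_k\to\Phi$.

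With convergence in hand I would track the vectors $\bw_k:=-X_k\bu\ge 0$ and exploit the normalization $(B-C-I)\bu=\bv$, i.e.\ $(B-C)\bu=\bu+\bv$. I claim $\bw_k\le\bu$ for all $k$. The base case is $\bw_0=B^{-1}C\bu$, and $\bu-\bw_0=B^{-1}(B-C)\bu=B^{-1}(\bu+\bv)\ge 0$. For the step, assuming $\bw_k\le\bu$ and using $-X_k\ge 0$, one has $X_k^2\bu=(-X_k)\bw_k\le(-X_k)\bu=\bw_k\le\bu$, whence $\bw_{k+1}=B^{-1}C\bu+B^{-1}X_k^2\bu\le B^{-1}(C+I)\bu=\bu-B^{-1}\bv\le\bu$. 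This simultaneously closes the induction and yields $\bw_{k+1}\le\bu-B^{-1}\bv$; letting $k\to\infty$ gives the desired $-\Phi\bu\le\bu-B^{-1}\bv$.

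The main obstacle is the key inequality $-\Phi\bu\le\bu$ (equivalently $\bw_k\le\bu$) that drives the induction. One is tempted to obtain it in closed form: multiplying \eqref{eq:QME2} by $\bu$ and writing $\bw=-\Phi\bu$ leads to $(B+\Phi-I)(I+\Phi)\bu=(B-C-I)\bu=\bv>0$, so if $B+\Phi-I$ were a nonsingular $M$-matrix one could invert it to conclude $(I+\Phi)\bu=\bu-\bw\ge 0$. However, although $B+\Phi$ is a nonsingular $M$-matrix by Lemma~\ref{lm:QBD-rho-2}, subtracting $I$ may destroy the $M$-matrix property, so this shortcut is not justified a priori. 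The monotone iteration circumvents the difficulty by assembling the estimate one step at a time; the $M$-matrix character of the $Z$-matrix $B+\Phi-I$ then follows a posteriori, once $(I+\Phi)\bu>0$ is known and Theorem~\ref{thm:MMtx-equiv} is applied.
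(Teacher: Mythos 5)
Your proof is correct, but it takes a genuinely different route from the paper: the paper offers no argument at all for Theorem~\ref{thm:QBD-exist}, simply noting that it is implied by Theorem~3.1 of \cite{Chen2019Highly} or Theorem~2.3 of \cite{Meng2018Condition}, whereas you construct a self-contained proof from the fixed-point iteration $X_{k+1}=X_0-B^{-1}X_k^2$ --- which is precisely the Bernoulli-like iteration of \cite{Yu2011On}. All your steps check out: the uniqueness-from-maximality observation is sound; $\Phi\le X_0$ follows correctly from $\Phi=X_0-B^{-1}\Phi^2$ with $B^{-1}\ge 0$ and $\Phi^2\ge 0$; the monotonicity of squaring on nonpositive matrices ($0\ge A\ge B$ implies $A^2\le B^2$, via $0\le -A\le -B$ and order-preservation of products of nonnegative matrices) is used correctly in both directions of the sandwich $\Phi\le X_{k+1}\le X_k\le 0$; the limit is a nonpositive solvent dominating $\Phi$, so maximality forces $X_k\to\Phi$; and the induction $\bw_k\le\bu$ with the identity $(C+I)\bu=B\bu-\bv$ delivers $\bw_{k+1}\le B^{-1}(C+I)\bu=\bu-B^{-1}\bv$ exactly as claimed, with the estimate surviving the limit. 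Your closing remark is also accurate: the factorization $(B+\Phi-I)(I+\Phi)\bu=(B-C-I)\bu=\bv$ is algebraically correct (using $\Phi^2+B\Phi=-C$), and you are right that one cannot invert $B+\Phi-I$ a priori since subtracting $I$ from the nonsingular $M$-matrix $B+\Phi$ need not preserve the $M$-matrix property, so the step-by-step induction is the honest way to get $-\Phi\bu\le\bu$. What your approach buys is an elementary, citation-free proof readable within this paper alone, at the cost of redoing a convergence analysis that the cited references package abstractly; what the paper's citation buys is brevity and a pointer to the more general framework (where the same estimate $-\Phi\bu\le\bu-B^{-1}\bv$ is derived for a broader class of equations), which is then reused for the dual equation in Theorem~\ref{thm:sol-prop}.
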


It can be checked that Theorem~\ref{thm:QBD-exist} is applicable to
\begin{equation*}
CY^2 + BY + I = 0,
\end{equation*}
which is called {\em dual equation\/} of \eqref{eq:QME2}. In conclusion, Theorem~\ref{thm:sol-prop} below gives
some of the important results, the proof is similar to that of
\cite[Theorem~3.2]{Chen2019Highly} and thus it is omitted here.

\begin{theorem}\label{thm:sol-prop}
Suppose \eqref{eq:cond}. The following statements hold.
\begin{enumerate}
  \item[{\rm (a)}] We have

        \begin{align*}
         \Phi \le X_0=-B^{-1}C\le 0, \quad -\Phi \bu&\le \bu-B^{-1}\bv, \\
         \Psi\le Y_0=-B^{-1}\le 0, \quad -\Psi \bu&\le \bu-B^{-1}\bv.
        \end{align*}

  \item[{\rm (b)}] $\rho(\Phi)<1$ and $\rho(\Psi)<1$.

  \item[{\rm (c)}] $I-\Phi\Psi$ and $I-\Psi\Phi$ are nonsingular $M$-matrices.
\end{enumerate}
\end{theorem}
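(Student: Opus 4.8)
The plan is to establish (a), (b), (c) in turn, feeding each part into the next and using the reserved positive vector $\bu$ throughout. For (a) I would invoke Theorem~\ref{thm:QBD-exist} twice: once for \eqref{eq:QME2} itself, which yields $\Phi\le X_0$ and $-\Phi\bu\le\bu-B^{-1}\bv$, and once for the dual $CY^2+BY+I=0$, whose applicability is noted above, which yields $\Psi\le Y_0$ and $-\Psi\bu\le\bu-B^{-1}\bv$. It then remains only to sign $X_0$ and $Y_0$: the hypothesis $B^{-1}C\ge0$ gives $X_0=-B^{-1}C\le0$, while $B$ being a nonsingular $M$-matrix gives $B^{-1}\ge0$ by Theorem~\ref{thm:MMtx-equiv}, hence $Y_0=-B^{-1}\le0$.

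For (b) I would argue by a Collatz--Wielandt estimate on the nonnegative matrices $-\Phi$ and $-\Psi$. Since $\bv>0$ and $B^{-1}\ge0$ is nonsingular (so it has no zero row), $B^{-1}\bv>0$; the inequality from (a) therefore sharpens to $(-\Phi)\bu\le\bu-B^{-1}\bv<\bu$ entrywise. Because $-\Phi\ge0$ and $\bu>0$, the upper Collatz--Wielandt bound gives $\rho(\Phi)=\rho(-\Phi)\le\max_i\big((-\Phi)\bu\big)_i/\bu_i<1$, and the same computation with $-\Psi$ gives $\rho(\Psi)<1$. (For $\Phi$ this also follows directly from Lemmas~\ref{lm:QBD-rho-2} and \ref{lm:QBD-rho-1}.)

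For (c) the first observation is that $\Phi\le0$ and $\Psi\le0$ make the products $\Phi\Psi$ and $\Psi\Phi$ entrywise nonnegative, so both $I-\Phi\Psi$ and $I-\Psi\Phi$ are $Z$-matrices; by Theorem~\ref{thm:MMtx-equiv} it then suffices to produce a positive vector on which each acts positively, and I would again use $\bu$. Writing $\bw=-\Psi\bu$, part (a) gives $0\le\bw\le\bu-B^{-1}\bv\le\bu$, so monotonicity of multiplication by $-\Phi\ge0$ yields $(\Phi\Psi)\bu=(-\Phi)\bw\le(-\Phi)\bu\le\bu-B^{-1}\bv<\bu$; hence $(I-\Phi\Psi)\bu>0$ and $I-\Phi\Psi$ is a nonsingular $M$-matrix. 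Interchanging the roles of $\Phi$ and $\Psi$ (with $-\Phi\bu$ in place of $-\Psi\bu$) gives $(I-\Psi\Phi)\bu>0$ and the same conclusion for $I-\Psi\Phi$.

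The sign bookkeeping and the monotonicity steps are routine; the one point that genuinely needs care is the transfer of Theorem~\ref{thm:QBD-exist} to the dual equation $CY^2+BY+I=0$ used in (a). One must check that this equation still meets the structural hypotheses and that the condition vector for the dual coincides with $\bv=(B-C-I)\bu$, so that its maximal nonpositive solvent $\Psi$ obeys the companion bound $-\Psi\bu\le\bu-B^{-1}\bv$ with the very same $\bu$ and $\bv$. Once that symmetric pair of inequalities is in hand, parts (b) and (c) are short and purely order-theoretic.
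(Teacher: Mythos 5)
Your proposal is correct and matches the argument the paper defers to (it omits the proof as ``similar to that of \cite[Theorem~3.2]{Chen2019Highly}''): part (a) by applying Theorem~\ref{thm:QBD-exist} to \eqref{eq:QME2} and to the dual equation $CY^2+BY+I=0$ with the same reserved $\bu$ and $\bv$, and part (c) by observing that $\Phi\Psi=(-\Phi)(-\Psi)\ge 0$ and $\Psi\Phi\ge 0$ make $I-\Phi\Psi$ and $I-\Psi\Phi$ $Z$-matrices, then verifying $(I-\Phi\Psi)\bu>0$ and $(I-\Psi\Phi)\bu>0$ and invoking Theorem~\ref{thm:MMtx-equiv}(c). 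Your only deviation is in (b), where the Collatz--Wielandt estimate $\rho(-\Phi)\le\max_i\bigl((-\Phi)\bu\bigr)_i/\bu_i<1$ (legitimate, since $B^{-1}\ge 0$ nonsingular and $\bv>0$ indeed give $B^{-1}\bv>0$) replaces the paper's combination of Lemma~\ref{lm:QBD-rho-2} with Lemma~\ref{lm:QBD-rho-1} and their dual analogues; this is an equally valid, slightly more self-contained variant that extracts both spectral-radius bounds directly from the inequalities in (a).
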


Now we are in position to develop a structure-preserving doubling algorithm for solving the \QME\ \eqref{eq:QME2}. Similar to the discussion in the introduction of \cite{Chen2019Highly}, \QME\ \eqref{eq:QME2} is connected with the matrix pencil
\begin{equation}\label{eq:QBD2SF1}
\scrA_0\begin{bmatrix}
                I\\
                X
              \end{bmatrix}
              =\scrB_0\begin{bmatrix}
                                  I\\
                                  X
                            \end{bmatrix}X,
\end{equation}
where
\begin{subequations}\label{eq:scrA-scrB0}
\begin{align}
\scrA_0&=\begin{bmatrix}
                          -B^{-1}C & 0\\
                         \hm B^{-1}C & I
                      \end{bmatrix}=:\kbordermatrix{ &\sss n & \sss n\cr
                                                    \sss n & \hm E_0 & 0 \cr
                                                    \sss n & -X_0 & I}, \label{eq:scrA0}\\
\scrB_0&=\begin{bmatrix}
                         I & \hm B^{-1}\\
                         0 & - B^{-1}
                      \end{bmatrix}=:\kbordermatrix{ &\sss n & \sss n\cr
                                                    \sss n & I & -Y_0 \cr
                                                    \sss n & 0 & \hm F_0} . \label{eq:scrB0}
\end{align}
\end{subequations}
Now that the matrix pencil $\scrA_0-\lambda\scrB_0$ is in (SF1), it is natural for us to
apply the following doubling algorithm (see Algorithm~\ref{alg:DA-SF1}) for (SF1) \cite{Huang2018A} to solve \eqref{eq:QBD2SF1}.

\begin{algorithm} 
\caption{Doubling Algorithm for (SF1) \cite{Huang2018A}}
\label{alg:DA-SF1}
\begin{algorithmic}[1]
    \REQUIRE $X_0,\,Y_0,\, E_0,\,F_0\in\bbR^{n\times n}$ determined by \eqref{eq:scrA-scrB0}.
    \ENSURE  $X_{\infty}$ as the limit of $X_i$ if it converges.
    \hrule\vspace{1ex}
    \FOR{$i=0, 1, \ldots,$ until convergence}
         \STATE compute $E_{i+1},\, F_{i+1},\, X_{i+1},\, Y_{i+1}$ according to
                \begin{align*}
                    E_{i+1}&= E_i(I_n-Y_iX_i)^{-1}E_i, \\
                    F_{i+1}&= F_i(I_n-X_iY_i)^{-1}F_i, \\
                    X_{i+1}&= X_i+F_i(I_n-X_iY_i)^{-1}X_iE_i, \\
                    Y_{i+1}&= Y_i+E_i(I_n-Y_iX_i)^{-1}Y_iF_i.
                \end{align*}

    \ENDFOR
    \RETURN $X_i$ at convergence as the computed solution.
\end{algorithmic}
\end{algorithm}

Theorem~\ref{thm:cvg-QBD-reg} below is essentially \cite[Theorem~6.1]{Chen2019Highly} or \cite[Theorem~4.1]{Guo2006A}. The only difference lies in the initial matrices $(E_0, F_0, X_0,Y_0)$.

\begin{theorem}\label{thm:cvg-QBD-reg}
Under \eqref{eq:cond}, the matrix sequences $\{E_k\}, \{F_k\}, \{X_k\}$
and $\{Y_k\}$ generated by {\em Algorithm~\ref{alg:DA-SF1}} are well-defined and, moreover, for $k\ge 1$,
\begin{enumerate}
  \item [{\rm (a)}] $E_k=(I-Y_k\Phi)\Phi^{2^k}\ge 0$;

  \item [{\rm (b)}] $F_k=(I-X_k\Psi)\Psi^{2^k}\ge 0$;

  \item [{\rm (c)}] $I-X_kY_k$ and $I-Y_kX_k$ are nonsingular $M$-matrices;

  \item [{\rm (d)}] $\Phi\le X_{k}\le X_{k-1}\le 0,\;\Psi\le Y_k\le Y_{k-1}\le 0$, and
          \begin{equation}\label{eq:cvg-bd-reg}
          0\le X_k-\Phi\le \Psi^{2^k}(-\Phi)\Phi^{2^k}, \,\,
          0\le Y_k-\Psi\le \Phi^{2^k}(-\Psi)\Psi^{2^k}.
          \end{equation}
\end{enumerate}
\end{theorem}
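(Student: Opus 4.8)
The plan is to prove (a)--(d) together by induction on $k$, since they are interlocked: property (c) is precisely what makes $(I-X_kY_k)^{-1}$ and $(I-Y_kX_k)^{-1}$ exist and be nonnegative, and this nonnegativity is what renders the next iterate well-defined and lets me control the signs appearing in (a), (b), (d). Throughout I would use two elementary facts. First, a sign fact: since $\Phi\le0$ and $\Psi\le0$, for $k\ge1$ the even powers $\Phi^{2^k},\Psi^{2^k}$ are nonnegative and the odd powers $\Phi^{2^k+1},\Psi^{2^k+1}$ are nonpositive. Second, a domination fact for $M$-matrices: if $A$ is a $Z$-matrix and $A\ge A_\star$ entrywise for some nonsingular $M$-matrix $A_\star$, then $A$ is itself a nonsingular $M$-matrix, which one sees by writing $A_\star=sI-P_\star$, $A=sI-P$ with $0\le P\le P_\star$ and comparing spectral radii.

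The algebraic engine is the pair of invariant relations
\[
\scrA_k\begin{bmatrix}I\\\Phi\end{bmatrix}=\scrB_k\begin{bmatrix}I\\\Phi\end{bmatrix}\Phi^{2^k},\qquad \scrB_k\begin{bmatrix}\Psi\\I\end{bmatrix}=\scrA_k\begin{bmatrix}\Psi\\I\end{bmatrix}\Psi^{2^k},
\]
where $\scrA_k,\scrB_k$ are the SF1 pencils assembled from $(E_k,F_k,X_k,Y_k)$ as in \eqref{eq:scrA-scrB0}. I would first verify the base case $k=0$ by direct substitution into \eqref{eq:scrA-scrB0}, collapsing the two identities with $Q_2(\Phi)=0$ and the dual equation $C\Psi^2+B\Psi+I=0$. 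The crux is then to show these relations survive one doubling step, which I expect to be the main obstacle: it is the standard doubling identity, namely that the transforming matrices $\scrM_k,\scrN_k$ underlying Algorithm~\ref{alg:DA-SF1} satisfy $\scrM_k\scrB_k=\scrN_k\scrA_k$ with $\scrA_{k+1}=\scrM_k\scrA_k$ and $\scrB_{k+1}=\scrN_k\scrB_k$. Granting this, left-multiplying the level-$k$ relations by $\scrM_k$ and $\scrN_k$ squares the exponent and delivers the level-$(k+1)$ relations. Reading off the top block of the first relation yields (a), $E_k=(I-Y_k\Phi)\Phi^{2^k}$; the bottom block of the second yields (b), $F_k=(I-X_k\Psi)\Psi^{2^k}$; the two leftover blocks give the residual identities $X_k-\Phi=F_k(-\Phi)\Phi^{2^k}=(I-X_k\Psi)\Psi^{2^k}(-\Phi)\Phi^{2^k}$ and its $Y$-analogue.

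With the identities available, the remaining statements are sign bookkeeping. Nonnegativity of $E_{k+1},F_{k+1}$ follows immediately from the recursion: for instance $E_{k+1}=E_k(I-Y_kX_k)^{-1}E_k\ge0$ because $(I-Y_kX_k)^{-1}\ge0$ by (c) and $E_k$ is sign-definite. For (d), the residual identity gives $X_k-\Phi=F_k(-\Phi)\Phi^{2^k}\ge0$ (all factors nonnegative for $k\ge1$), while monotonicity comes from $X_{k+1}-X_k=F_k(I-X_kY_k)^{-1}X_kE_k\le0$, the nonpositivity being forced by $X_k\le0$, the nonnegative inverse, and the sign-definiteness of $E_k,F_k$; the $Y$-statements are symmetric, and $X_{k+1}\le X_k\le0$ then gives $X_{k+1}\le0$. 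For (c) at level $k+1$, the sandwich $\Phi\le X_{k+1}\le0$, $\Psi\le Y_{k+1}\le0$ yields $0\le X_{k+1}Y_{k+1}\le\Phi\Psi$, so $I-X_{k+1}Y_{k+1}$ is a $Z$-matrix dominating the nonsingular $M$-matrix $I-\Phi\Psi$ of Theorem~\ref{thm:sol-prop}(c) and is therefore a nonsingular $M$-matrix (symmetrically for $I-Y_{k+1}X_{k+1}$); this also supplies the invertibility that advances the induction. Finally the upper bound in \eqref{eq:cvg-bd-reg} follows by discarding the nonnegative correction in the residual identity, since $-X_k\Psi\,\bigl[\Psi^{2^k}(-\Phi)\Phi^{2^k}\bigr]\le0$ gives $X_k-\Phi\le\Psi^{2^k}(-\Phi)\Phi^{2^k}$, and likewise for $Y_k-\Psi$.
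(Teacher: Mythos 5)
Your proposal is correct and takes essentially the same route as the paper, which does not write out a proof but invokes the standard SDA convergence argument of \cite[Theorem~6.1]{Chen2019Highly} and \cite[Theorem~4.1]{Guo2006A} (noting only the changed initial matrices): namely, induction on the doubling invariants $\scrA_k\left[\begin{smallmatrix}I\\ \Phi\end{smallmatrix}\right]=\scrB_k\left[\begin{smallmatrix}I\\ \Phi\end{smallmatrix}\right]\Phi^{2^k}$ and $\scrB_k\left[\begin{smallmatrix}\Psi\\ I\end{smallmatrix}\right]=\scrA_k\left[\begin{smallmatrix}\Psi\\ I\end{smallmatrix}\right]\Psi^{2^k}$, with the block identities, the sign bookkeeping for $E_k,F_k$ (nonpositive at $k=0$, nonnegative for $k\ge 1$), and the $Z$-matrix domination of $I-\Phi\Psi$ from Theorem~\ref{thm:sol-prop}(c), exactly as you reconstruct. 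One word slip only: the correction term you discard to obtain \eqref{eq:cvg-bd-reg} is nonpositive, not ``nonnegative,'' as your own displayed inequality $-X_k\Psi\,\Psi^{2^k}(-\Phi)\Phi^{2^k}\le 0$ correctly shows.
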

From \eqref{eq:cvg-bd-reg} and Theorem~\ref{thm:sol-prop}(b), we can conclude that $X_k$ and $Y_k$ generated by Algorithm \ref{alg:DA-SF1} converge quadratically to $\Phi$ and $\Psi$, respectively,
under \eqref{eq:cond}.

\section{Numerical Examples}\label{sec:egs}
In this section, we will present numerical results obtained with Algorithm~\ref{alg:DA-SF1} for solving \QME\ \eqref{eq:QME2}. We will compare Algorithm~\ref{alg:DA-SF1}(referred to as \DA) with two Bernoulli-like methods presented in \cite{Yu2011On}(referred to, respectively, as \BL\ and \BLL\ as in \cite{Kim2016Diagonal}) and three modified Bernoulli-like methods with diagonal update skill \cite{Kim2016Diagonal}(referred to as \BLDU, \BLLDU\ and \BLLDUU, respectively). In reporting numerical results, we will record the numbers of iterations (denoted by ``Iter''), the elapsed CPU time in seconds (denoted as ``CPU'') and plot iterative history curves for normalized residual \NRes\ defined by
\begin{equation*}
\NRes(X_k)
  =\frac{\|X_k^2+BX_k+C\|_\infty}
        {\|X_k\|_\infty(\|X_k\|_\infty+\|B\|_\infty)+\|C\|_\infty}.
\end{equation*}
All runs terminate if the current iteration satisfies either $\NRes<10^{-12}$ or the number of the prescribed iteration $k_{max}=1000$ is exceeded. All computations are done in MATLAB.

\begin{example}[{\cite{Kim2016Diagonal}}]\label{eg:QME1}
Consider the equation \eqref{eq:QME2} with
$$
B=
    \begin{bmatrix}
      20 &-10 & & & \\
      -10&30  & -10& & \\
      &-10&30&-10&\\
      & &\ddots&\ddots&\ddots\\
      &&&-10&30&-10\\
      &&&&-10&20
    \end{bmatrix},\quad
C=
    \begin{bmatrix}
      15 &-5 & & & \\
      -5&15  & -5& & \\
      &-5&15&-5&\\
      & &\ddots&\ddots&\ddots\\
      &&&-5&15&-5\\
      &&&&-5&15
    \end{bmatrix}.
$$

\setlength{\tabcolsep}{8.0pt}
\begin{table}[!h]
\centering
\caption{Numerical results for Example \ref{eg:QME1}}\label{table:QME1}
\begin{tabular}{|l|ccc|ccc|}\hline
 & \multicolumn{3}{|c|}{$n=30$} & \multicolumn{3}{|c|}{$n=100$} \\ \cline{2-7}
$Method$\hm & Iter & CPU &\NRes & Iter &CPU& \NRes\\ \hline\hline

$\DA$ & $4$ & $0.0009$ & $8.9890\times 10^{-17}$&$4$ & $0.0067$ & $1.0356\times 10^{-16}$ \vextra\\

$\BL$ & $11$ & $0.0010$ & $1.3381\times 10^{-13}$&$11$ & $0.0065$ & $1.3380\times 10^{-13}$\\

$\BLDU$ &$8$ & $0.0008$ & $5.5076\times 10^{-13}$&$8$ & $0.0042$ & $5.5072\times 10^{-13}$\\

$\BLL$ & $13$ & $0.0009$ & $8.4734\times 10^{-13}$&$13$ & $0.0046$ & $8.4734\times 10^{-13}$\\

$\BLLDU$ & $12$ & $0.0016$ & $1.5404\times 10^{-13}$&$12$ & $0.0050$ & $1.5410\times 10^{-13}$\\

$\BLLDUU$ & $10$ & $0.0011$ & $1.0202\times 10^{-13}$&$10$ & $0.0047$ & $1.0198\times 10^{-13}$\\ \hline
\end{tabular}
\end{table}

In Table~\ref{table:QME1}, we record the numerical results for Example \ref{eg:QME1}. We find that \DA\ uses the smallest iteration numbers and delivers the lowest value of \NRes\ within all the tested methods. For this example, in some situations, \DA\ is not the fastest one in terms of elapsed CPU time. The reason is that it needs more cost at each iterative step than other methods and its iteration number is not less enough than other's.
Figure~\ref{Fig:Curves-for-QME1} plots the convergent history for Example \ref{eg:QME1}. Quadratic monotonic convergence of \DA\ and monotonic linear convergence of Bernoulli-like methods clearly show. \qquad$\Diamond$

\begin{figure}[t]
{\centering
\begin{tabular}{ccc}
\hspace{-0.5 cm}
\resizebox*{0.50\textwidth}{0.26\textheight}{\includegraphics{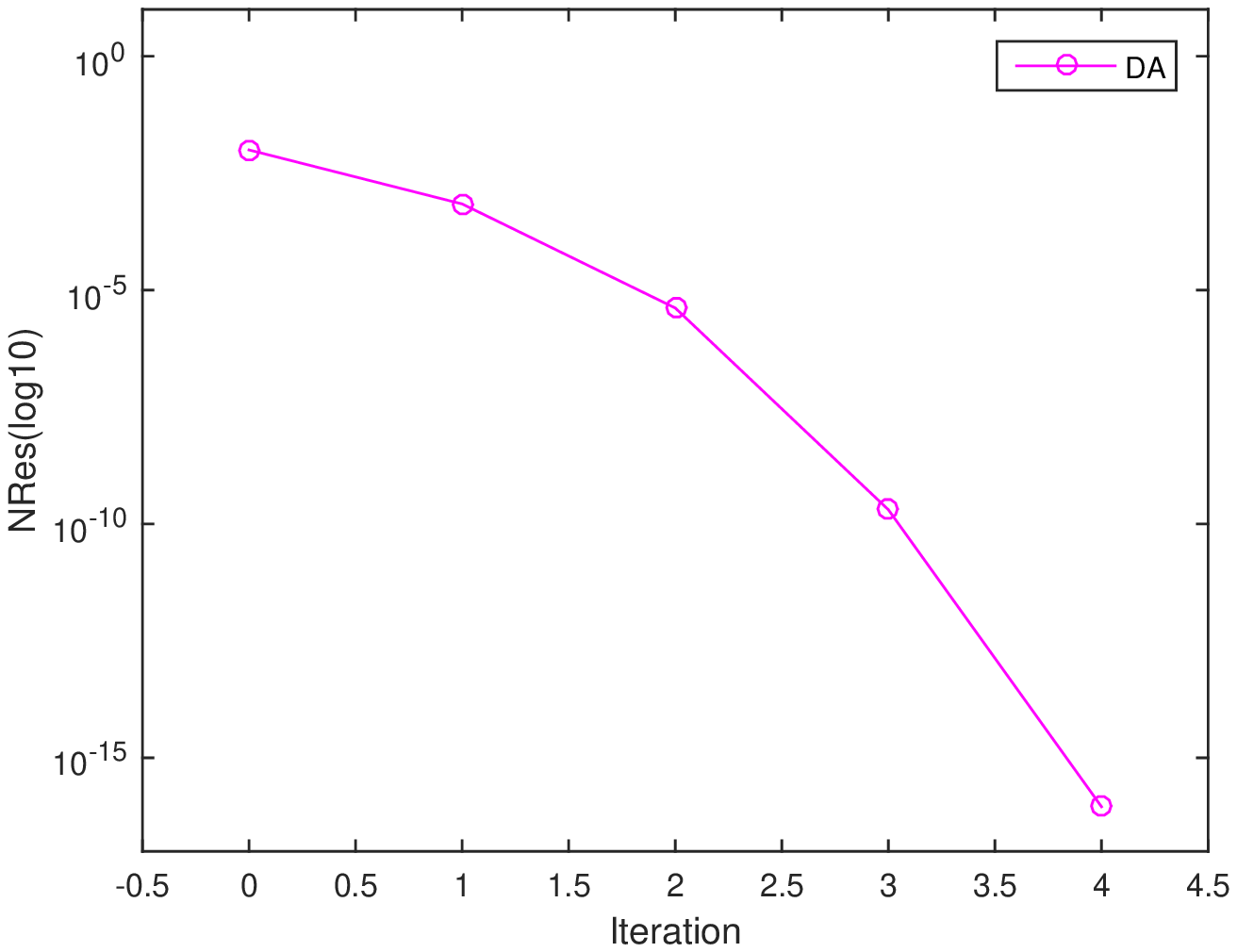}}
& & \hspace{-1 cm}
\resizebox*{0.50\textwidth}{0.26\textheight}{\includegraphics{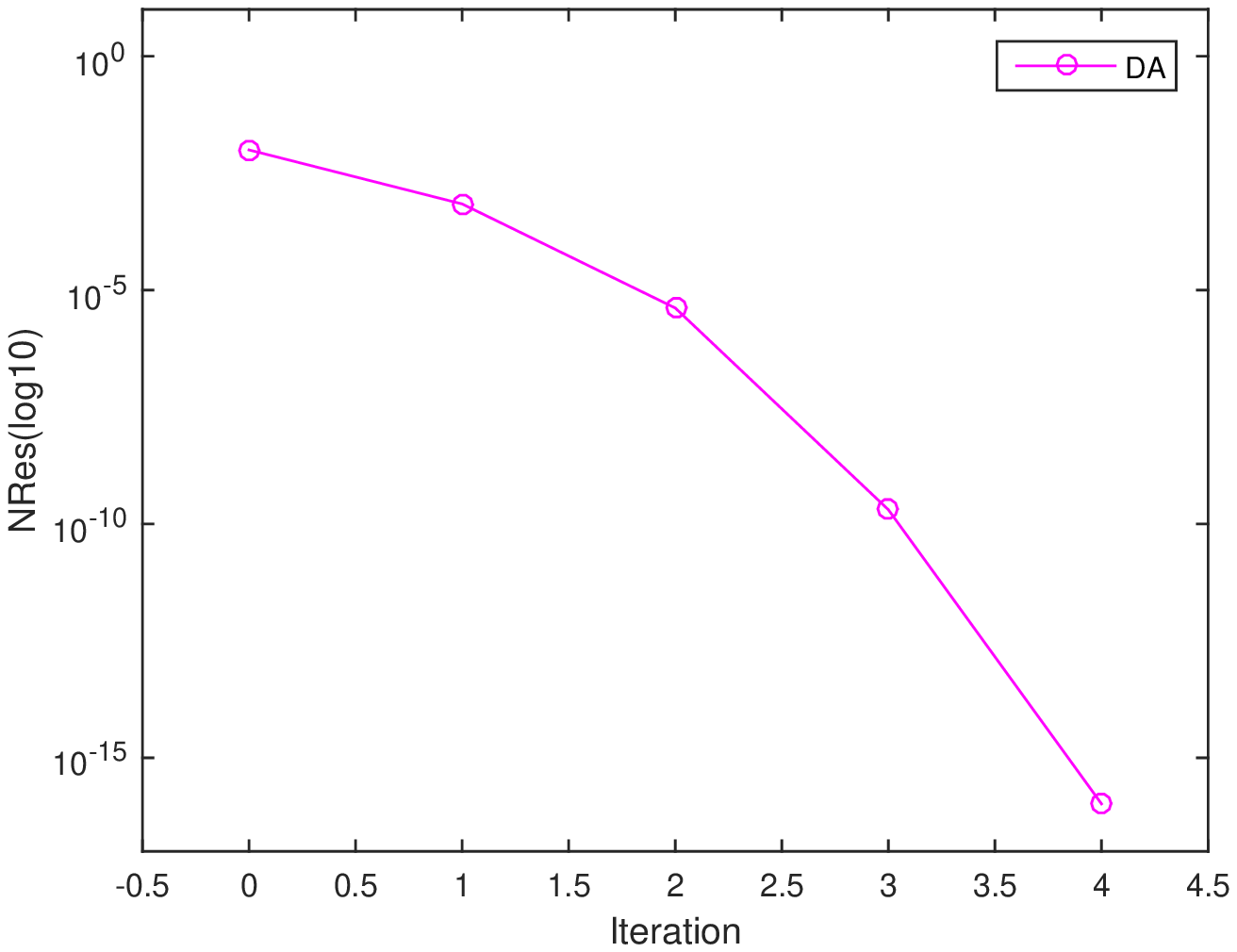}} \vspace{2ex}\\
\hspace{-0.5 cm}
\resizebox*{0.50\textwidth}{0.26\textheight}{\includegraphics{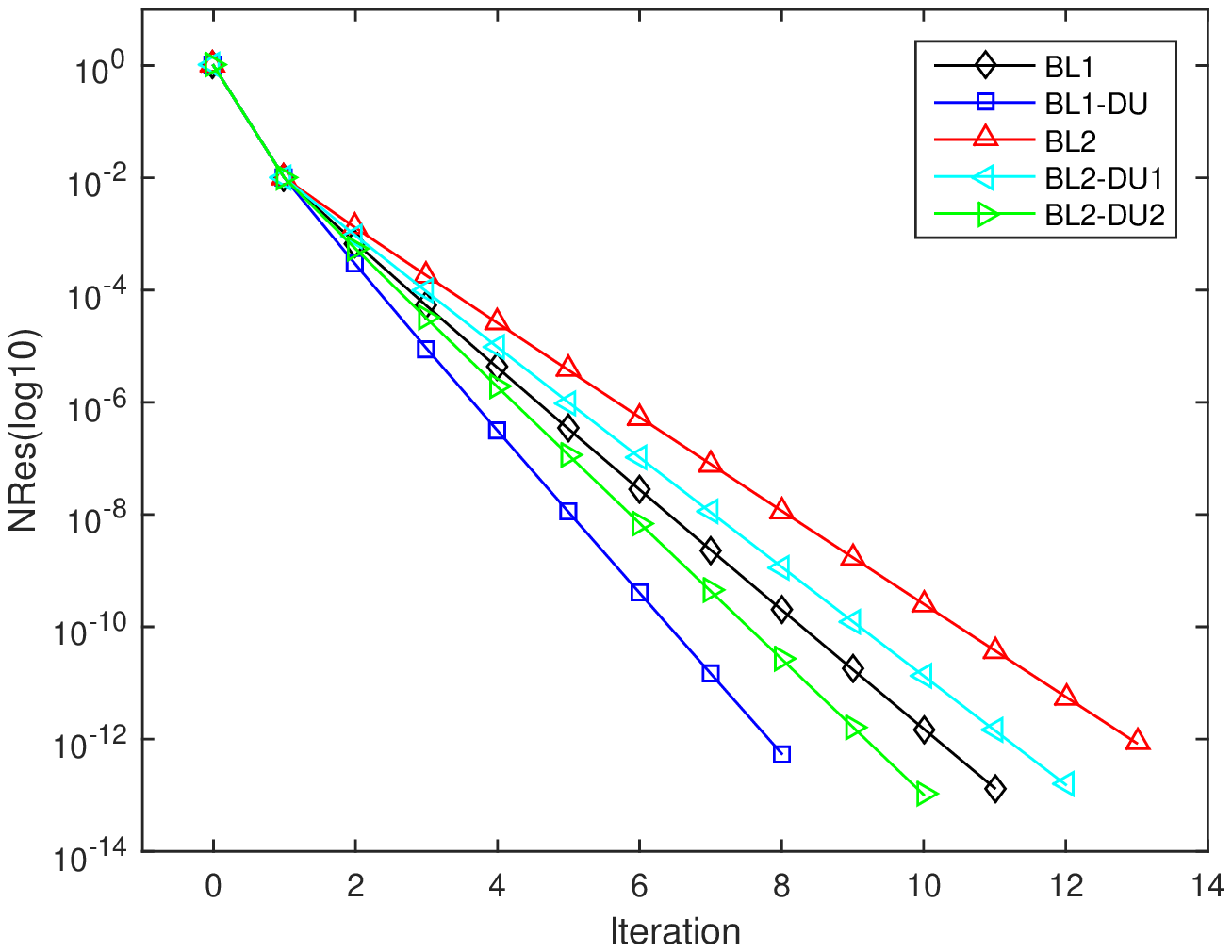}}
& & \hspace{-1 cm}
\resizebox*{0.50\textwidth}{0.26\textheight}{\includegraphics{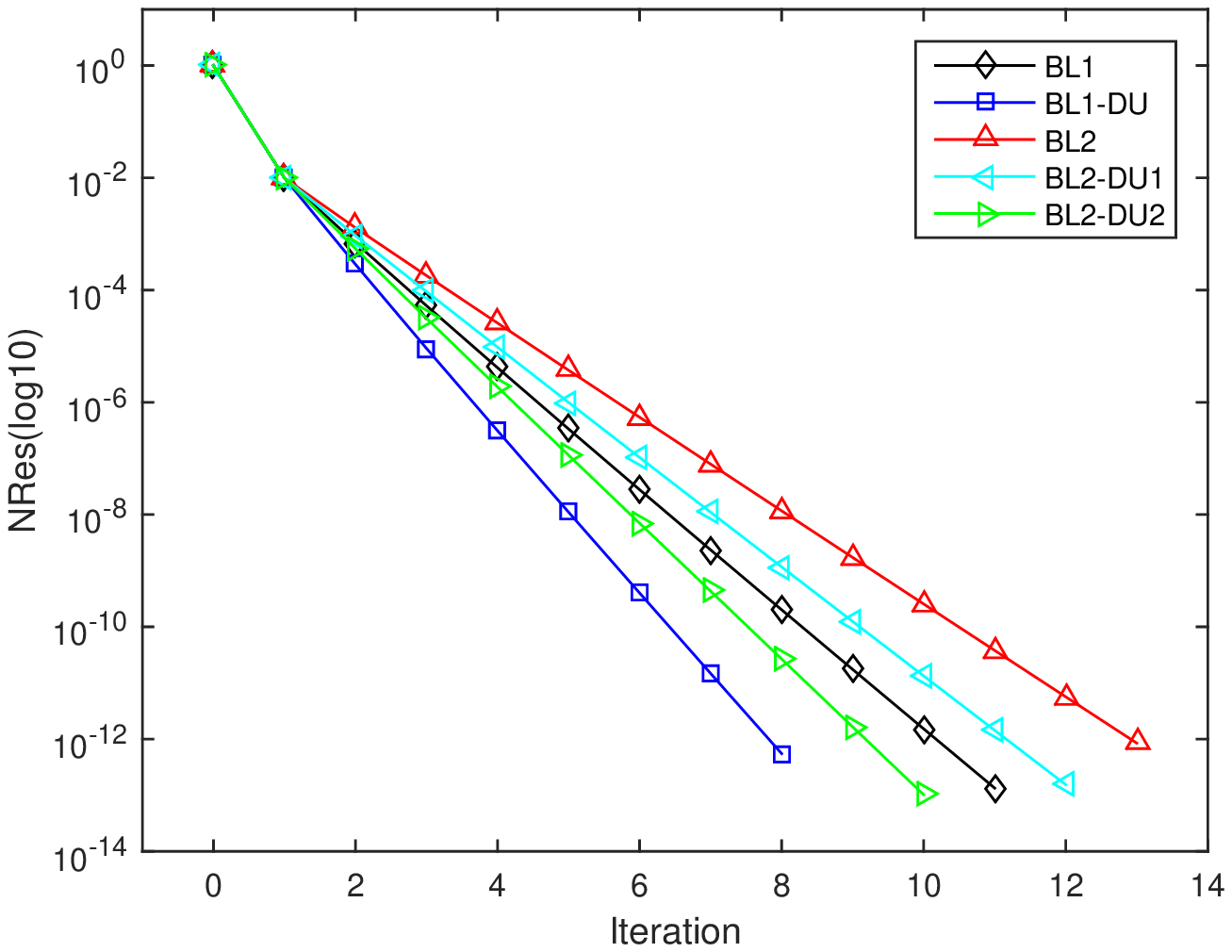}}
\end{tabular}\par
}\vspace{-0.5 cm}
\caption{Convergent history curves for Example \ref{eg:QME1}. The left two are for $n=30$ and the right two are for $n=100$.}
\label{Fig:Curves-for-QME1}
\end{figure}

\end{example}
\begin{example}[{\cite{Kim2016Diagonal}}]\label{eg:QME2}
Consider the equation \eqref{eq:QME2} with
$$
B=
    \begin{bmatrix}
      4 &-1 & & & \\
      -1&4  & -1& & \\
      &-1&4&-1&\\
      & &\ddots&\ddots&\ddots\\
      &&&-1&4&-1\\
      &&&&-1&4
    \end{bmatrix},\quad C=I.
$$

Table \ref{table:QME2} displays the the numerical results for Example \ref{eg:QME2}. We find that \DA\ is the best one for this example in terms of Iter, CPU and \NRes. Figure \ref{Fig:Curves-for-QME2} shows the convergent history for Example \ref{eg:QME2}. Quadratic monotonic convergence of \DA\ and monotonic linear convergence of Bernoulli-like methods again clearly show. \qquad$\Diamond$

\setlength{\tabcolsep}{8.0pt}
\begin{table}[!h]
\centering
\caption{Numerical results for Example \ref{eg:QME2}}\label{table:QME2}
\begin{tabular}{|l|ccc|ccc|}\hline
 & \multicolumn{3}{|c|}{$n=20$} & \multicolumn{3}{|c|}{$n=100$} \\ \cline{2-7}
$Method$\hm & Iter & CPU &\NRes & Iter &CPU& \NRes\\ \hline\hline
$\DA$ & $7$ & $0.0006$ & $1.0236\times 10^{-16}$&$9$ & $0.1212$ & $1.4387\times 10^{-16}$ \vextra\\

$\BL$ & $78$ & $0.0023$ & $8.9153\times 10^{-13}$&$325$ & $0.1620$ & $9.8009\times 10^{-13}$\\

$\BLDU$ &$55$ & $0.0020$ & $7.5483\times 10^{-13}$&$227$ & $0.1306$ & $9.8569\times 10^{-13}$\\

$\BLL$ & $143$ & $0.0038$ & $9.5330\times 10^{-13}$&$637$ & $0.2219$ & $9.7345\times 10^{-13}$\\

$\BLLDU$ & $120$ & $0.0033$ & $9.2154\times 10^{-13}$&$539$ & $0.2355$ & $9.7406\times 10^{-13}$\\

$\BLLDUU$ & $97$ & $0.0027$ & $8.6896\times 10^{-13}$&$441$ & $0.1890$ & $9.7441\times 10^{-13}$\\ \hline
\end{tabular}
\end{table}

\begin{figure}[t]
{\centering
\begin{tabular}{ccc}
\hspace{-0.5 cm}
\resizebox*{0.50\textwidth}{0.26\textheight}{\includegraphics{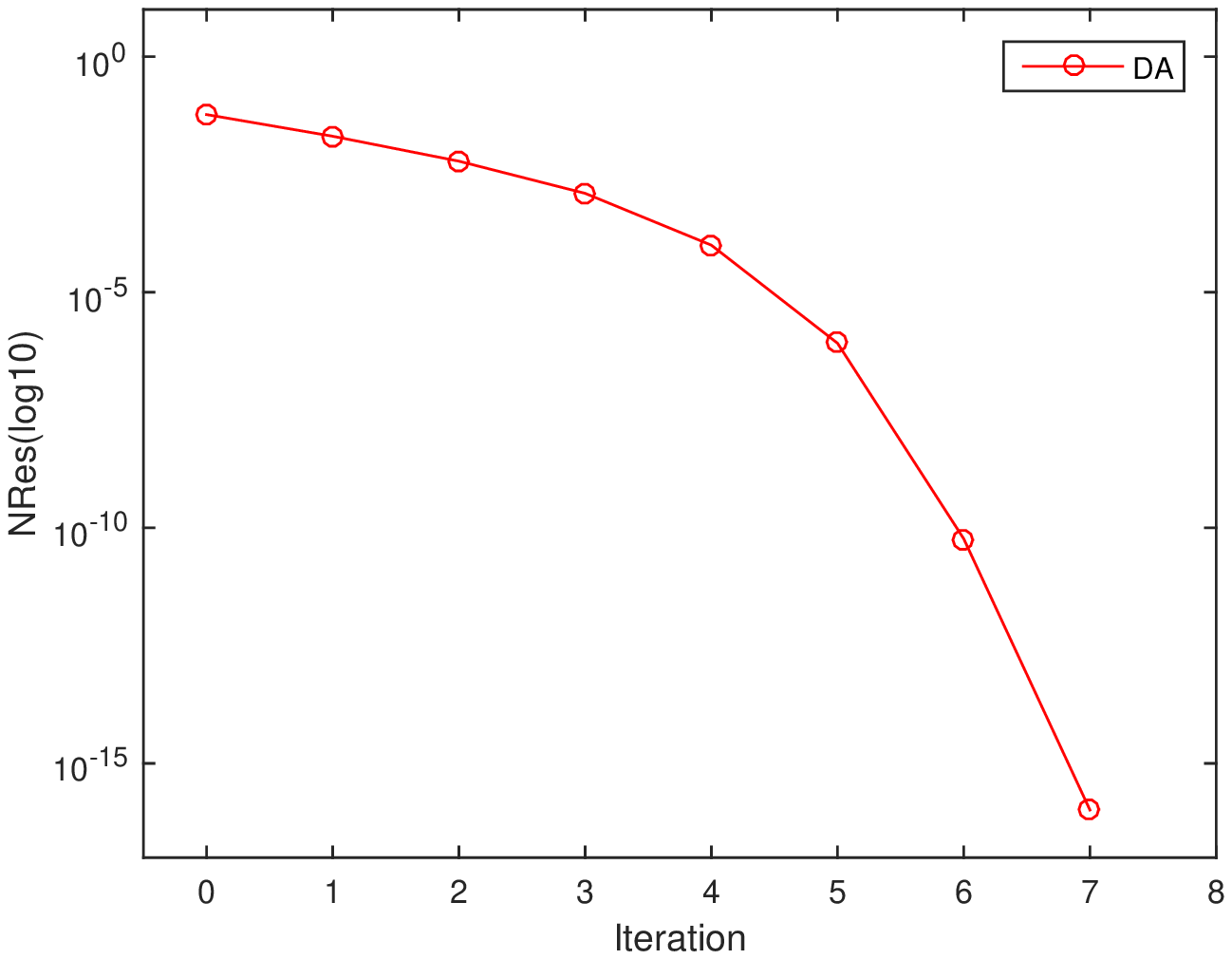}}
& & \hspace{-1 cm}
\resizebox*{0.50\textwidth}{0.26\textheight}{\includegraphics{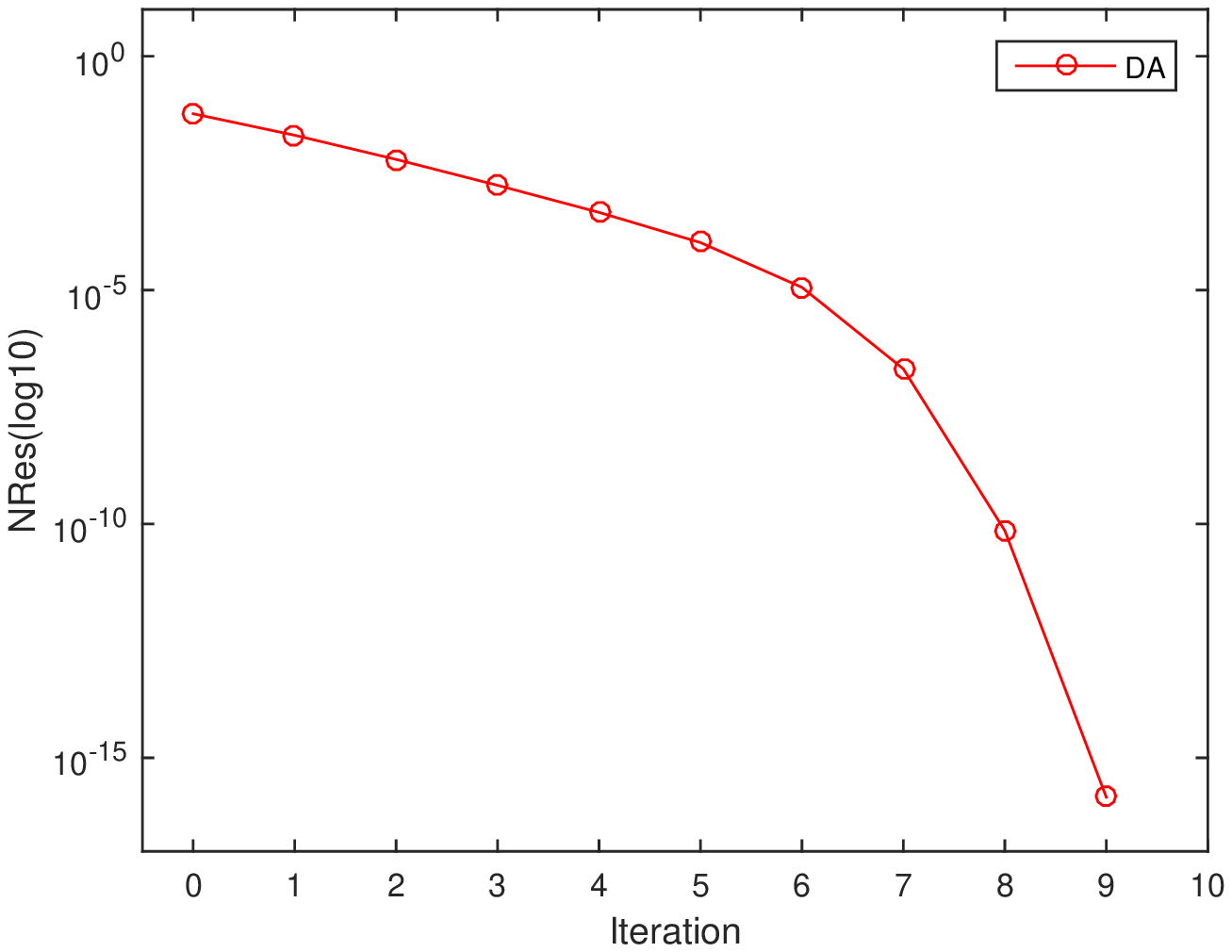}} \vspace{2ex}\\
\hspace{-0.5 cm}
\resizebox*{0.50\textwidth}{0.26\textheight}{\includegraphics{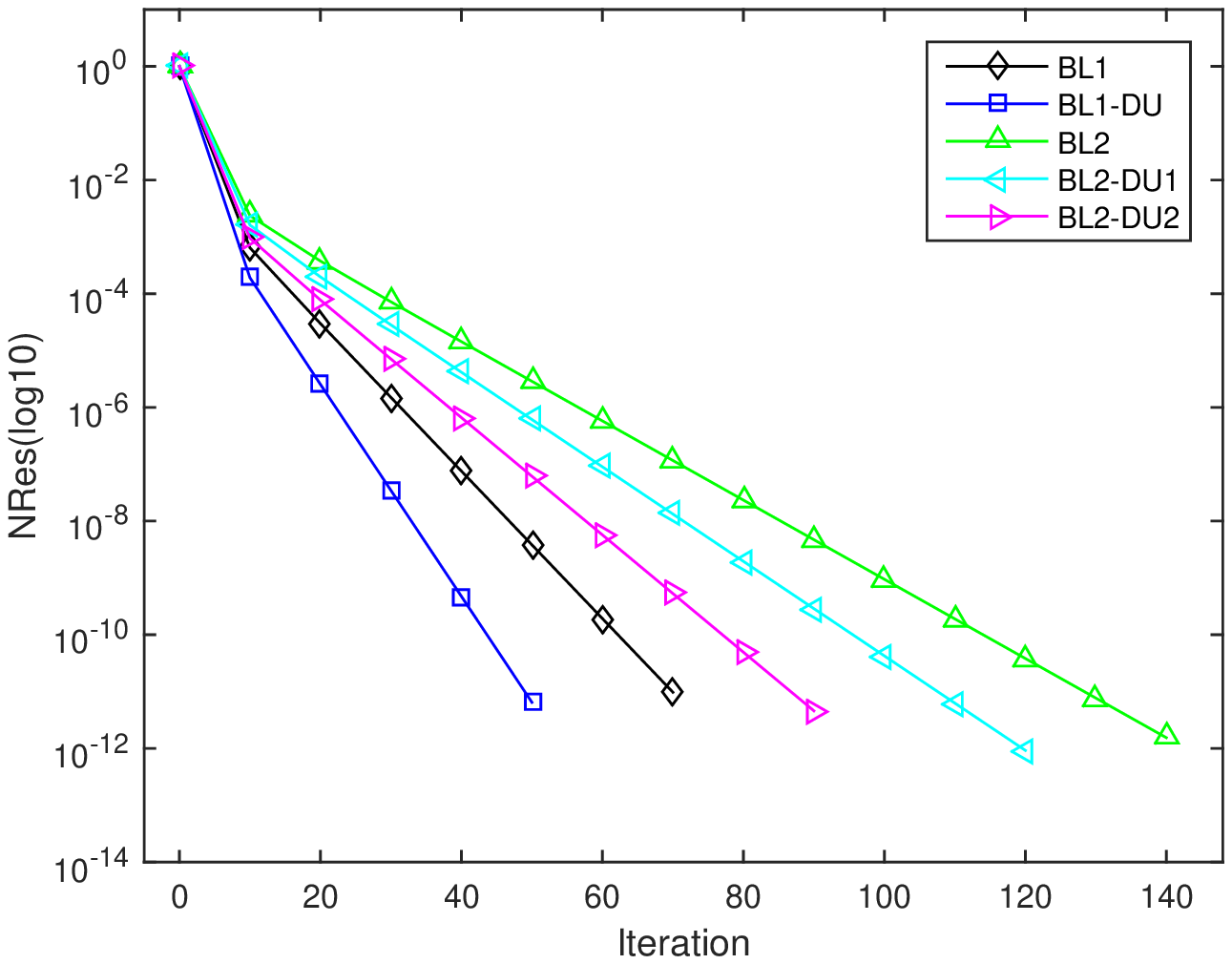}}
& & \hspace{-1 cm}
\resizebox*{0.50\textwidth}{0.26\textheight}{\includegraphics{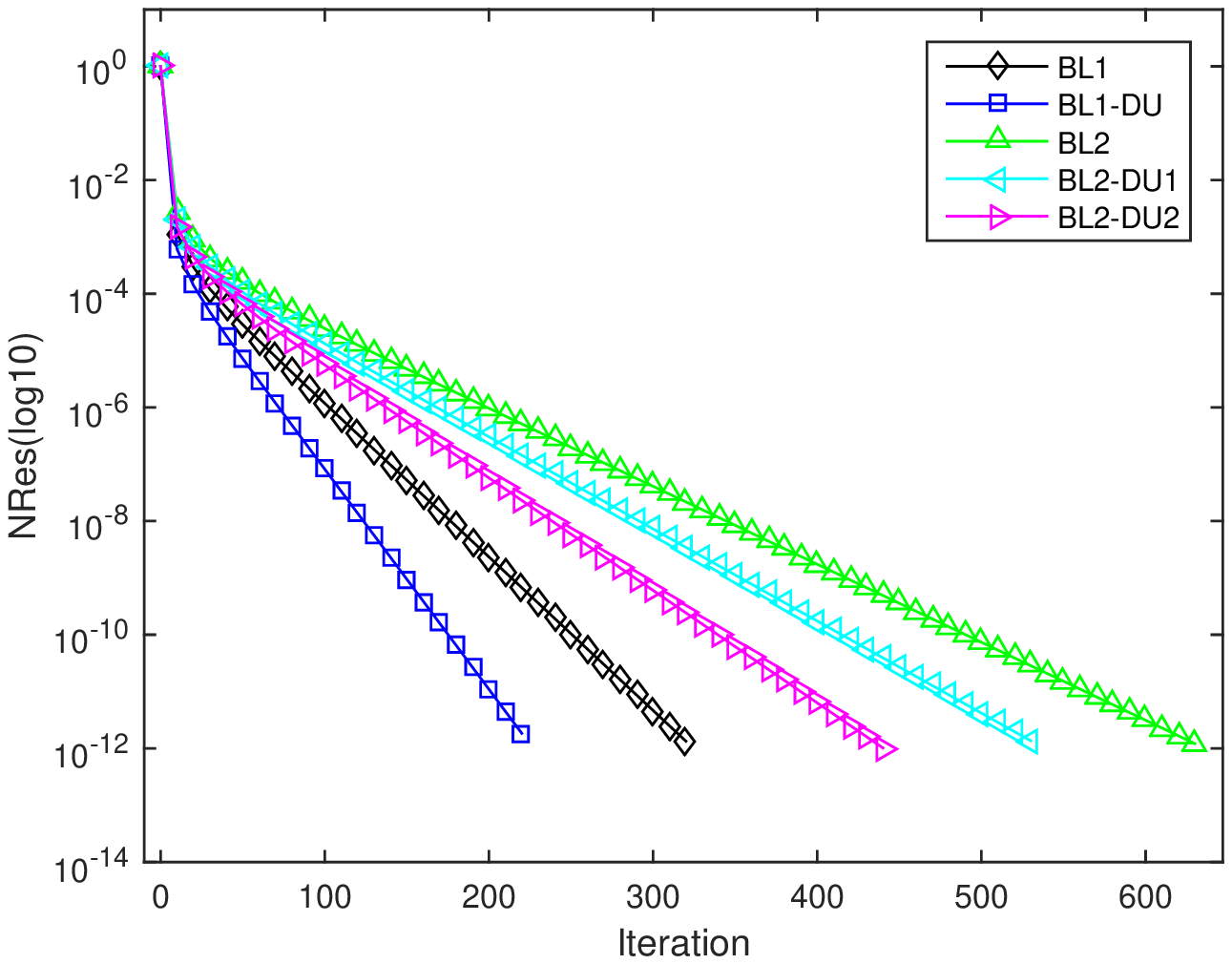}}
\end{tabular}\par
}\vspace{-0.5 cm}
\caption{Convergent history curves for Example \ref{eg:QME2}. The left two are for $n=20$ and the right two are for $n=100$.}
\label{Fig:Curves-for-QME2}
\end{figure}
\end{example}

\section{Conclusions}\label{sec:concl}
The structure-preserving doubling algorithm for (SF1) \cite{Huang2018A} is extended to compute the maximal nonpositive solvent of a type of \QME s. It is shown the approximations generated by the algorithm are globally monotonically and quadratically convergent. Two numerical examples are presented to demonstrate the feasibility and effectiveness of our method. Our work here can be seen as a new application of the structure-preserving doubling algorithm for (SF1).




\end{document}